\newcommand{\rl}{{\mathbb{R}}}
\newcommand{\cx}{{\mathbb{C}}}
\newcommand{\om}{\omega}
\newcommand{\e}{\varepsilon}
\newcommand {\Sig}{{\Sigma}}
\newtheorem{theorem}{Theorem}[section]
\newtheorem{lemma}[theorem]{Lemma}
\newtheorem{prop}[theorem]{Proposition}
\newtheorem{cor}[theorem]{Corollary}
\theoremstyle{definition}
\newtheorem{defn}[theorem]{Definition}
\numberwithin{equation}{section}
\newcommand{\abs}[1]{\left| \hspace{1pt} #1 \hspace{1pt} \right|}
\newcommand{\norm}[1]{\| \hspace{1pt} #1 \hspace{1pt} \|}
\newcommand{\close}[1]{\overline{#1}}
\newcommand{\oball}[2]{\mathbb{B}({#1}\hrs{1},{#2})}
\newcommand{\hrs}[1]{\hspace{#1pt}}
\newcommand{\vrs}[1]{\vspace{#1pt}}
\newcommand{\nat}[1]{\mathbb{N}^{\hrs{0}#1}}
\newcommand{\integ}[1]{\mathbb{Z}^{\hrs{0}#1}}
\newcommand{\real}[1]{\mathbb{R}^{\hrs{0}#1}}
\newcommand{\compx}[1]{\mathbb{C}^{\hrs{0}#1}}
\newcommand{\wh}[1]{\widehat{#1}}
\newcommand{\Chi}{\mathcal{X}}
\newcommand{\pp}[1]{#1_{\mathit{\Delta}}}
\newcommand{\ess}[1]{{#1}^{\mathrm{ess}}}
\newcommand{\trc}[1]{{#1}^{\mathrm{tr}}}
\renewcommand{\it}[1]{\textit{#1}}
\renewcommand{\bf}[1]{\textbf{#1}}
\begin{document}
\title[Open Whitney umbrellas]{Open Whitney umbrellas are locally polynomially convex}
\author{Octavian Mitrea}
\author{Rasul Shafikov}
\address{Department of Mathematics, the University of Western Ontario, London, Ontario, N6A 5B7, Canada}

\begin{abstract}
It is proved that any smooth open Whitney umbrella in $\cx^2$ is locally polynomially convex 
near the singular point.
\end{abstract}

\maketitle

\let\thefootnote\relax\footnote{MSC: 32E20,32E30,32V40,53D12.
Key words: polynomial convexity, Lagrangian manifold, symplectic structure,
plurisubharmonic function
}
\let\thefootnote\relax\footnote{
R. Shafikov is partially supported by the Natural Sciences and Engineering Research Council of Canada.
}
%%%%%%%%%%%%%%%%% section %%%%%%%%%%%%%%%%%%
\section{Introduction}

The goal of this paper is to give a generalization of a theorem of Shafikov and Sukhov \cite{SS1}, \cite{SS2},
concerning local polynomial convexity of open Whitney umbrellas. Recall that a standard open (or unfolded) Whitney
umbrella is the map
$\pi : \real{2}_{(t,s)} \rightarrow \real{4}_{(x,u,y,v)} \cong \compx{2}_{(z=x+iy, w=u+iv)}$ given by 
\begin{equation} \label{eq:1}
\displaystyle \pi(t,s) = \left( ts, \frac{2t^3}{3}, t^2, s \right).
\end{equation}
The map $\pi$ is a smooth homeomorphism onto its image, nondegenerate except at the origin. It satisfies $\pi^*\omega_{\rm st} = 0$,
where  $\om_{\rm st} = dx \wedge dy + du \wedge dv$ is the standard symplectic form on $\cx^2$, hence $\Sigma := \pi(\real{2})$ is a Lagrangian embedding in 
$\compx{2}$, with an isolated singular point at the origin. If $\phi: \cx^2 \to \cx^2$ is a local symplectomorphism, which
we may assume, without loss of generality, to preserve the origin, then the image $\phi(\Sig)$ is called an open Whitney umbrella. 
It is called locally polynomially convex at the origin if there exists a basis of compact neighbourhoods of the 
origin in $\phi(\Sig)$ that are polynomially convex (see the next section for details). Our main result is the following:

\begin{theorem}\label{t.main}
Let $\phi: \cx^2 \to \cx^2$ be an arbitrary smooth symplectomorphism. Then the surface
$\phi (\Sigma)$ is locally polynomially convex at the origin.
\end{theorem}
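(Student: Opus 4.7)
The strategy is to exhibit a nonnegative plurisubharmonic function $\rho$ on a neighborhood $U$ of $0\in\cx^2$ that vanishes precisely on $\phi(\Sig)\cap U$ and is strictly plurisubharmonic off of $\phi(\Sig)$. Given such a $\rho$, the sublevel sets $\{\rho<\e\}$ provide a basis of Stein neighborhoods of small compact neighborhoods $K$ of $0$ in $\phi(\Sig)$. A standard argument, based on H\"ormander $L^2$-estimates for $\delbar$ with plurisubharmonic weights together with the H\"ormander--Wermer polynomial approximation theorem applied on $\phi(\Sig)\setminus\{0\}$ (which is totally real, being a smooth Lagrangian submanifold), then upgrades holomorphic separation of points outside $K$ to separation by polynomials, yielding local polynomial convexity.

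First I would normalize the problem. Composing $\phi$ on either side with suitable linear symplectomorphisms of $(\cx^2,\om_{\rm st})$ preserves both the class of open Whitney umbrellas and the property of local polynomial convexity at the origin, so one may assume $d\phi(0)$ takes a convenient form compatible with the degenerate tangent cone of $\Sig$ at $0$, which is the real line spanned by $\partial/\partial v$. Set $\widetilde\pi:=\phi\circ\pi:\rl^2\to\cx^2$. Since $\phi$ is symplectic and $\pi^*\om_{\rm st}=0$, we have $\widetilde\pi^*\om_{\rm st}=0$, so $\phi(\Sig)\setminus\{0\}$ is a smooth Lagrangian, hence totally real, surface in $\cx^2$.

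The core of the proof is the construction of $\rho$ across the umbrella singularity. Away from the origin the squared Euclidean distance to $\phi(\Sig)$ is strictly plurisubharmonic off of $\phi(\Sig)$ and vanishes to order two on $\phi(\Sig)$, so the construction is classical there. The difficulty is concentrated at the umbrella point, where the tangent cone collapses to a line and ordinary quadratic bounds fail. I would seek $\rho$ as a weighted sum of squares of real-valued functions defining $\widetilde\pi(\rl^2)$ locally, with anisotropic weights chosen to absorb the asymmetric orders of vanishing of the components of $\pi$, in particular the $t^2$ in the $y$-coordinate and the $2t^3/3$ in the $u$-coordinate. The essential ingredient is the Lagrangian identity $\widetilde\pi^*\om_{\rm st}=0$, which translates into first-order relations between $\partial_t\widetilde\pi$, $\partial_s\widetilde\pi$, and the ambient complex structure $J$; these relations produce cancellations in the mixed Hermitian derivatives of $\rho$ without which strict plurisubharmonicity at $0$ would fail. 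Verifying these cancellations to the precise orders dictated by the quadratic--cubic asymmetry of $\pi$ is the main technical obstacle of the proof; once it is done, the mechanism described in the first paragraph delivers Theorem~\ref{t.main}.
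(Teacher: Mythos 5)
Your proposal takes a genuinely different route from the paper, but it contains a gap that is not merely technical: it is located precisely at the point where the whole difficulty of the problem lies, and you yourself flag it as unresolved.

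You propose to find a nonnegative plurisubharmonic function $\rho$ on a neighbourhood of $0$ whose zero set is exactly $\phi(\Sigma)$, and to deduce local polynomial convexity from the resulting Stein neighbourhood basis of $\phi(\Sigma)$. Away from the umbrella point this is the classical squared-distance construction for totally real manifolds and is unproblematic. But the entire content of the theorem is concentrated at the singular point, and you give no construction there; you only indicate that one should try a weighted sum of squares of defining functions with anisotropic weights, and then write that ``verifying these cancellations \dots\ is the main technical obstacle of the proof.'' That obstacle is the proof. There is no known construction of such a $\rho$ near an open Whitney umbrella in the generality needed here (arbitrary smooth symplectomorphism $\phi$, not just generic ones), and there is no a priori reason the construction must succeed: at the singular point the tangent cone degenerates to a real line, the surface is parametrized with mismatched orders $(ts, \tfrac{2}{3}t^3, t^2, s)$, and the standard quadratic estimates that make the distance-squared function strictly plurisubharmonic break down. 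Your reduction of $d\phi(0)$ to a ``convenient form'' also cannot be pushed far enough to make the problem parameter-free: after normalization, a nontrivial quadratic form (the matrix $G=B^tB+D^tD$ in the paper's notation) survives and must be tracked in any estimate.

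The paper deliberately avoids this construction. Instead of a plurisubharmonic function vanishing \emph{exactly} on the surface, it produces a strictly plurisubharmonic $\rho$ whose zero locus is a \emph{hypersurface} $M\supset\Sigma$ (so $\rho$ changes sign and is certainly not nonnegative), which is strictly pseudoconvex away from the origin. This only places the polynomial hull of a small piece $X$ of $\phi(\Sigma)$ inside $\overline{\Omega(\rho)}$, not inside $\phi(\Sigma)$, so a second mechanism is needed: Duval's characteristic-foliation argument (Proposition~\ref{prp:1}), which shows the trace of $\widehat X$ cannot cross a leaf of the foliation $T_p\phi(\Sigma)\cap H_p\phi(M)$. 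This converts the convexity question into a question about the phase portrait of a planar vector field with a degenerate singularity, which the paper then resolves with the Brunella--Miari reduction to the principal part and Bruno's normal-form analysis. If you want to salvage your approach, you would need to actually produce the function $\rho$ and establish its strict plurisubharmonicity across the singular point uniformly in $\phi$; as it stands, your argument assumes the hard part.
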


This result was proved for a generic real-analytic $\phi$  in \cite{SS1} and for a generic smooth $\phi$ in \cite{SS2}. 
Our theorem establishes polynomial convexity in \it{full generality} in this context. One immediate application of our 
main result is the following.

\begin{cor}\label{c:1}
For $\Sigma$ and $\phi$ as in Theorem~\ref{t.main}, there exists $\e>0$ sufficiently small, such that any continuous function on 
$\phi(\Sigma)\cap \mathbb B(\phi(0),\e)$ can be uniformly approximated by holomorphic polynomials.
\end{cor}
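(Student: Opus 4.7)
The plan is to deduce Corollary~\ref{c:1} directly from Theorem~\ref{t.main} together with a standard polynomial approximation theorem for polynomially convex compacta that are totally real off a small exceptional set.

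First, Theorem~\ref{t.main} provides a compact polynomially convex neighbourhood $K$ of $\phi(0)$ in $\phi(\Sigma)$, and I would fix $\e>0$ small enough that the compact set $K_\e := \phi(\Sigma)\cap \overline{\mathbb B}(\phi(0),\e)$ is contained in $K$. Next I observe that $\phi(\Sigma)\setminus\{\phi(0)\}$ is a smooth Lagrangian submanifold of $(\cx^2,\om_{\rm st})$: the map $\pi$ is a smooth embedding off the origin with $\pi^*\om_{\rm st}=0$, and precomposing with the symplectomorphism $\phi$ preserves this vanishing. A two-real-dimensional Lagrangian submanifold of $(\cx^2,\om_{\rm st})$ is automatically totally real, because for any nonzero $v$ in the tangent plane one has $\om_{\rm st}(v,Jv)=|v|^2\neq 0$, so $Jv$ cannot lie in the tangent plane. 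Consequently, $K\setminus\{\phi(0)\}$ is locally contained in a smooth totally real submanifold of $\cx^2$.

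Next I would invoke the polynomial approximation theorem of O'Farrell--Preskenis (and its refinements due to Anderson, Izzo, and Wermer): if $X\subset\cx^n$ is a compact polynomially convex set and $E\subset X$ is a closed subset with $P(E)=C(E)$ such that $X\setminus E$ is locally contained in a totally real $C^1$ submanifold of $\cx^n$, then $P(X)=C(X)$. Applying this with $X=K$ and $E=\{\phi(0)\}$---for which $P(E)=C(E)$ holds trivially---yields $P(K)=C(K)$.

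To finish, given any continuous function $f$ on $K_\e$, Tietze's extension theorem produces a continuous extension $\tilde f$ to $K$; by the previous step, $\tilde f$ can be uniformly approximated on $K$ by holomorphic polynomials, and restricting to $K_\e$ delivers the required approximation of $f$. I do not anticipate any serious obstacle here beyond invoking Theorem~\ref{t.main}: the potential difficulty posed by the singular point is resolved automatically, because a single point is a trivially removable exceptional set for the O'Farrell--Preskenis criterion.
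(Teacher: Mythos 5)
Your proposal is correct and takes essentially the same route as the paper: the authors likewise deduce the corollary from Theorem~\ref{t.main} together with the Anderson--Izzo--Wermer/O'Farrell--Preskenis approximation criterion, exploiting that the Lagrangian surface is totally real away from the single singular point (they simply cite the corresponding argument in~\cite{SS1} rather than spell it out). The details you supply---Lagrangian implies totally real via $\omega_{\rm st}(v,Jv)=|v|^2$, and the Tietze restriction step---are exactly the content of that omitted argument.
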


For the proof of Theorem~\ref{t.main} our approach is similar to that in \cite{SS1}: one constructs an auxiliary real analytic 
hypersurface $M$ that contains the 
standard umbrella $\Sigma$. The hypersurface $M$ is singular at the origin, but it is smooth and strictly pseudoconvex at all
other points. Then one considers the so-called \it{characteristic foliation} on $\phi(\Sig) \setminus \{0\}$ with respect to $\phi(M)$.
It turns out that certain topological configurations of the phase portrait of the foliation guarantee local polynomial convexity 
of $\phi(\Sig)$ at the origin. Direct computations yield a system of ODEs that determines the phase portrait, however, the system is degenerate, and standard tools from dynamical systems cannot be directly applied. In \cite{SS1} the authors used the theory 
of normal forms of Bruno~\cite{B} and a result of  Dumortier~\cite{D} to determine the phase portrait of the characteristic 
foliation. This was generalized to the smooth case in \cite{SS2}. In our approach we use a result of Brunella and Miari~\cite{BM} 
to reduce the problem of determining the phase portrait of $\phi(\Sig)$ to that of the so-called \it{principal} part of the vector field arising from the foliation. Under certain nondegeneracy conditions on the principal part, its phase portrait is topologically equivalent to that of 
the original vector field. The system obtained in \cite{SS1} has degenerate principal part, and therefore, the result in \cite{BM} 
could not be applied in that case. However, a suitable modification of the auxiliary hypersurface $M$, introduced in this paper, gives 
a system with a nondegenerate principal part. Our final calculations of the phase portrait of the principal part also use Bruno's 
normal form theory. 

The proof of the corollary uses local polynomial convexity of the umbrella established in Theorem~\ref{t.main} and the result of 
Anderson, Izzo and Wermer~\cite{AIW}. The proof of Cor.~1 in~\cite{SS1} goes through in our case without any further modifications,
once the local polynomial convexity is established.

Our interest in open Whitney umbrellas originates in the paper of Givental~\cite{Giv}, who showed that any compact real surface $S$,
orientable or not, admits a so-called Lagrangian inclusion, a map $F: S \to \mathbb C^2$, which is a local Lagrangian embedding except a 
finite number of singularities that are either double points or Whitney umbrellas. It is well-known (see, e.g., \cite{A} or \cite{NS}) that 
certain surfaces do not admit a Lagrangian inclusion $F$ without umbrellas, and so open Whitney umbrellas appear to be intricately related to the topology of the 
surfaces. The study of convexity properties near Whitney umbrellas is an instrumental part in this investigation. In particular, combining 
Theorem~\ref{t.main} with the results in~\cite{SS2} we conclude that \it{any Lagrangian inclusion is locally polynomially convex at every
point}.

%%%%%%%%%%%%%%%%%%%%%%%%%%%%%%%%%%%%%%%%%%
\section{Reduction to a Dynamical System} \label{s.red2ds}

In this section we review how the problem of local polynomial convexity near a Whitney umbrella can be reduced to the 
computation of the phase portrait of a certain dynamical system, a method that was introduced in \cite{SS1}. In fact, 
the procedure works without modifications for a somewhat more general type of isolated singularities.

\subsection{The characteristic foliation}
Let $\tau : \real{2} \rightarrow \real{4} \cong \compx{2}$, $\tau(0)=0$, be a homeomorphism onto its image, smooth except at the origin, 
and such that $S = \tau(\real{2})$ is a totally real surface in $\compx{2}$ with an isolated singular point at the origin. Suppose 
$S$ is embedded in a real hypersurface $M$ in $\compx{2}$. We define a field of lines determined at every $p \in S \setminus \{0\}$ by 
\begin{equation*}
L_p = T_pS \cap H_pM,
\end{equation*}
where $H_pM = T_pM \cap JT_pM$ is the complex tangent space of $M$ at $p$ and $J$ is the standard complex structure on $\compx{2}$. The foliation defined by the integral curves corresponding to this field is called the \it{characteristic foliation} of $S$ (with respect to $M$).

Let us also suppose that $M$ is defined as the zero locus of a function $\rho : \compx{2} \rightarrow \real{}$, smooth and strictly plurisubharmonic  near the origin,
\begin{equation*}
M = M(\rho) = \{(z,w) \in \compx{2} : \rho(z,w) = 0 \}, \hrs{5} \nabla \rho |_{M\setminus\{0\}} \neq 0,
\end{equation*}
and let
\begin{equation*} \label{eq:10}
\Omega(\rho) = \{ (z,w) \in \compx{2} : \rho(z,w) < 0 \}.
\end{equation*}

Recall that the \it{polynomially convex hull} $\hat{K}$ of a compact set $K \subset \compx{2}$ is defined as 
\begin{equation*}
\hat{K} = \{ z \in \compx{n} : \abs{P(z)} \leq \norm{P}_K, \text{ for every holomorphic polynomial } P \}.
\end{equation*}
$K$ is called \it{polynomially convex} if $K = \hat{K}$. Its \it{essential hull} $\ess{K}$ is defined by
$\ess{K} = \close{\hat{K} \setminus K}$, and its \it{trace} $\trc{K}$ by $\trc{K} = \ess{K} \cap K$. We note that
\begin{equation} \label{eq:13}
\ess{K} \subseteq \wh{\trc{K}}.
\end{equation}
Indeed, a local maximum principle due to Rossi \cite{Ros, St} states that if $K$ is a compact set in $\compx{n}$, $E$ is a compact subset of $\hat{K}$ and $U$ is an open subset of $\compx{n}$ that contains $E$, then for all $f \in \mathcal{O}(U)$, $\norm{f}_E = \norm{f}_{(E \cap K)\cup \partial E}$, where the boundary of $E$ is taken with respect to $\hat{K}$. Now, by choosing $E = \ess{K}$ and $U = \compx{2}$, we obtain (\ref{eq:13}).

Since $\tau$ is continuous, the set $S = \tau(\real{2})$ is connected. Let $\varepsilon >0$ be such that $\rho$ is strictly plurisubharmonic 
in $\oball{0}{\varepsilon}$. By a classical result (see, for example, \cite{H, St}), the polynomially convex hull of $\close{S \cap \oball{0}{\varepsilon}}$ agrees with its psh-hull. Hence, the polynomial hull of the set $\close{S \cap \oball{0}{\varepsilon}}$ is contained in 
$\close{\Omega(\rho) \cap \oball{0}{\varepsilon}}$. Let $X$ be the connected component of $S \cap \close{\oball{0}{\varepsilon}}$ 
containing the origin. Then $X \setminus \{0\}$ is a smooth compact real surface embedded in $\partial \Omega(\rho)$. The following key
proposition is essentially due to Duval \cite{Duv} (see also J\"oricke \cite{Jor}).

\begin{prop} \label{prp:1} 
$\trc{X}$ cannot intersect a leaf of the characteristic foliation at a totally real point of $X$ without crossing it.
\end{prop}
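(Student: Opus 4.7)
The plan is to argue by contradiction, in the spirit of Duval~\cite{Duv} and J\"oricke~\cite{Jor}. Suppose $p$ is a totally real point of $X$, $p \in \trc{X}$, and $\gamma$ is a leaf of the characteristic foliation through $p$ such that $\trc{X}$ meets $\gamma$ at $p$ without crossing. Shrinking if needed, fix a neighbourhood $V$ of $p$ in $X$ on which $X$ is smooth and totally real and $\gamma \cap V$ is a smooth arc separating $V$ into two open halves $V^+$ and $V^-$, with $\trc{X} \cap V \subset \gamma \cup \close{V^+}$; in particular $V^- \cap \trc{X} = \emptyset$.

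The main geometric input is a local description of $\hat{X}$ near a totally real point of $X \subset M$. Since $p$ is totally real on $X$ and $M$ is strictly pseudoconvex near $p$, standard Bishop-type attachment for totally real submanifolds of strictly pseudoconvex hypersurfaces produces a smooth one-parameter family $\{\Delta_s\}_{s \in (-s_0,s_0)}$ of small embedded holomorphic disks contained in $\close{\Omega(\rho)}$, with interiors in $\Omega(\rho)$, whose boundaries decompose as $\partial \Delta_s = \beta_s \cup \beta_s'$ with $\beta_s$ a subarc of a leaf of the characteristic foliation in $X$ and $\beta_s'$ a short arc in $M$ disjoint from $X$. As $s$ varies, the arcs $\beta_s$ sweep out a neighbourhood of $\gamma \cap V$ in $V$ with $\beta_0 \subset \gamma$, $\beta_s \subset V^+$ for $s > 0$, and $\beta_s \subset V^-$ for $s < 0$. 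A plurisubharmonic barrier argument along the short $M$-arc $\beta_s'$, using strict pseudoconvexity of $M$, yields $\sup_{\beta_s'}|P| \leq \|P\|_{X \cap V}$ for any holomorphic polynomial $P$, and hence $\Delta_s \subset \hat{X}$.

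Fix now $s < 0$. Then $\Delta_s \subset \hat{X}$, and $\interior(\Delta_s) \subset \Omega(\rho)$ is disjoint from $X \subset \partial \Omega(\rho)$. Hence $\interior(\Delta_s) \subset \hat{X} \setminus X$, and the boundary arc $\beta_s$ lies in $\close{\hat{X} \setminus X} = \ess{X}$. Combined with $\beta_s \subset X$, this gives $\beta_s \subset \ess{X} \cap X = \trc{X}$. But $\beta_s \subset V^-$, contradicting $V^- \cap \trc{X} = \emptyset$.

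The main obstacle is the disk attachment construction in the second paragraph: producing a one-parameter family of analytic disks whose boundary arcs in $X$ lie in genuine characteristic leaves, with the plurisubharmonic barrier giving $\Delta_s \subset \hat{X}$. This is a delicate local piece of analysis that is by now standard in the subject~\cite{Duv,Jor}; once it is in hand, the rest of the proof reduces to a direct application of Rossi's local maximum principle already invoked in (\ref{eq:13}).
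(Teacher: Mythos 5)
The paper's own proof is a sketch that follows Duval's original argument via \emph{Oka's characterization} of polynomial hulls: one builds a family of algebraic (or holomorphic) curves from the leaves of the characteristic foliation and, using strict pseudoconvexity of $\Omega$, shows this family can be continuously pushed out of $\Omega$, forcing the hull to be trivial on the non-trace side of $\gamma$ near $p$. Your proposal instead attempts a Bishop-disk attachment, which is a genuinely different route, but as written it contains a gap that does not appear to be repairable along the lines you indicate.

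The problematic step is the claim $\Delta_s \subset \hat X$, obtained from a ``plurisubharmonic barrier argument along $\beta_s'$'' asserting $\sup_{\beta_s'}|P| \leq \|P\|_{X\cap V}$. Strict plurisubharmonicity of $\rho$ gives an inclusion in the \emph{opposite} direction: it keeps $\hat X$ inside $\overline{\Omega(\rho)}$, but it gives no mechanism whatsoever for placing arcs $\beta_s' \subset M\setminus X$ (or disks bounded partly by them) \emph{into} $\hat X$. A psh barrier built from $\rho$ is maximized on $M$, so it cannot control $|P|$ on an arc of $M$ disjoint from $X$ by its values on $X$. Without that inequality, the maximum principle on $\Delta_s$ only yields $\sup_{\Delta_s}|P| \leq \max(\|P\|_X, \sup_{\beta_s'}|P|)$, which is vacuous.

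There is also a structural sign that the argument proves too much. Nowhere in your derivation of $\Delta_s \subset \hat X$ do you use the standing assumption $p \in \trc X$. If the disk family and barrier really gave $\Delta_s \subset \hat X$ for all $s$, then $\beta_s \subset \trc X$ for all $s$, i.e.\ $\trc X$ would contain an entire two-dimensional neighbourhood of $\gamma\cap V$ in $V$, regardless of any hypothesis on $\trc X$. That would already contradict, e.g., the case where $X$ is polynomially convex and $\trc X = \emptyset$. The correct logic in Duval's approach is dual to yours: one does not fill $\hat X$ on the $V^-$ side with disks, but rather exhibits a continuous Oka family of curves sweeping that side which escapes $\overline\Omega$, thereby proving $\hat X \cap V^- = X \cap V^-$; this shows $\ess X$ cannot approach $p$ from $V^-$, and combined with the hypothesis that $\trc X$ does not cross into $V^+$ either, one contradicts $p\in\trc X$. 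To salvage your write-up you would have to replace the disk-attachment step with an argument of this ``pushing off'' type, or else supply a genuinely different reason why the family $\Delta_s$ lies in the hull.
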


The original proof of Duval can be easily adapted to our situation. It is an application of Oka's characterization of polynomially convex subsets of $\compx{n}$. Oka's family of algebraic curves 
can be constructed from the leaves of the characteristic foliation, and because $\Omega$ is strictly pseudoconvex, it suffices to ensure that
the family leaves $\Omega$. See~\cite{SS1} for details.

The last step in reducing the problem to a dynamical system is provided by the following result. Recall that a \it{rectifiable arc} is the homeomorphic image of an interval under a Lipschitz continuous map.

\begin{prop} \label{prp:2}
Suppose that there exist two rectifiable arcs $\gamma_1$, $\gamma_2$ in $X$ such that
\begin{enumerate}[(i)]
\item $\gamma_1 \cap \gamma_2 = \{0\}$;
\item $\gamma_j$ are smooth at all points except, possibly, at the origin;
\item For any compact subset $K \subset X$ not contained in $\gamma_1 \cup \gamma_2$, there exists a leaf $\gamma$ of the characteristic foliation of $S$ such that $K \cap \gamma \neq \emptyset$ but $K$ does not meet both sides of $\gamma$.
\end{enumerate} 
Then, $X$ is polynomially convex.
\end{prop}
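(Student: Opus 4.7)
The plan is to reduce everything to the key inclusion
\begin{equation*}
\trc{X}\subseteq\gamma_1\cup\gamma_2,
\end{equation*}
and then to combine it with the polynomial convexity of the skeleton $\gamma_1\cup\gamma_2$. Granting the inclusion for a moment, the local maximum principle in the form (\ref{eq:13}) yields
\begin{equation*}
\ess{X}\subseteq\widehat{\trc{X}}\subseteq\widehat{\gamma_1\cup\gamma_2}=\gamma_1\cup\gamma_2\subseteq X,
\end{equation*}
where the middle equality is the classical fact that a compact subset of $\compx{2}$ of finite one-dimensional Hausdorff measure, smooth away from a single point, is polynomially convex (a direct consequence of the Alexander--Stolzenberg theorems on hulls of one-dimensional sets, applicable thanks to the rectifiability of $\gamma_1,\gamma_2$ and condition (ii)). Consequently $\widehat{X}=X\cup\ess{X}=X$.

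To establish the key inclusion I would argue by contradiction. If $\trc{X}$ is not contained in $\gamma_1\cup\gamma_2$, then the compact set $K:=\trc{X}\subseteq X$ also fails to be contained in $\gamma_1\cup\gamma_2$, so hypothesis (iii) supplies a leaf $\gamma$ of the characteristic foliation of $S$ with $\trc{X}\cap\gamma\neq\emptyset$ and with $\trc{X}$ meeting only one local side of $\gamma$ inside $X$. Choose any $p\in\trc{X}\cap\gamma$. The characteristic foliation is defined on $X\setminus\{0\}$, where every point is totally real by the standing assumption on $S$, so $p$ is a totally real point of $X$. Proposition~\ref{prp:1} then forces $\trc{X}$ to cross $\gamma$ at $p$, producing trace points in every neighborhood of $p$ on both local sides of $\gamma$. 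This contradicts the one-sidedness provided by (iii), and the inclusion follows.

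The only delicate ingredient is the polynomial convexity of the one-dimensional skeleton $\gamma_1\cup\gamma_2$, which is precisely where the rectifiability hypothesis enters; everything else is a direct application of Proposition~\ref{prp:1} together with the inclusion (\ref{eq:13}). I do not foresee any serious obstacle beyond invoking the classical hull theory correctly, and in particular no dynamical information about the foliation is needed here, since all such information has been packaged into hypothesis (iii).
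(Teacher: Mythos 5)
Your overall strategy is the same as the paper's: establish $\trc{X}\subseteq\gamma_1\cup\gamma_2$ via Proposition~\ref{prp:1} and hypothesis (iii), then pass through (\ref{eq:13}) and the polynomial convexity of $\gamma_1\cup\gamma_2$ to conclude $\wh{X}=X$. Your contradiction argument for $\trc{X}\subseteq\gamma_1\cup\gamma_2$ is fine and in fact fills in a step the paper leaves implicit.

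However, the ``classical fact'' you invoke to get $\wh{\gamma_1\cup\gamma_2}=\gamma_1\cup\gamma_2$ is false as stated, and this is a genuine gap. A compact set of finite one-dimensional Hausdorff measure, smooth away from a point, need not be polynomially convex: a smooth closed curve in $\cx\times\{0\}\subset\cx^2$ has finite length, is smooth everywhere, and its hull is the full closed Jordan region. The Alexander--Stolzenberg machinery does not give polynomial convexity outright; it gives a \emph{dichotomy}: for a compact connected set $\Gamma$ of finite length, $\wh{\Gamma}\setminus\Gamma$ is either empty or a pure one-dimensional analytic subvariety of $\cx^n\setminus\Gamma$. Since $\gamma_1\cap\gamma_2=\{0\}$ and the origin need not be an endpoint of either arc (indeed in the application the $\gamma_j$ are the curves $s=y^{\pm}t^2$, which cross at the origin), $\gamma_1\cup\gamma_2$ is not itself a single arc, so one cannot just cite the polynomial convexity of arcs. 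One must explicitly rule out the possibility that $\gamma_1\cup\gamma_2$ bounds a one-dimensional variety. The paper does exactly this: it notes that each $\gamma_j$ alone is polynomially convex (Stout, Cor.~3.1.2), invokes Stout's Theorem~3.1.1 to get the dichotomy for the union, and then refers to the argument of Corollary~2 in \cite{SS1} to exclude the variety case. Your proof skips this last, essential step; without it, the equality $\wh{\gamma_1\cup\gamma_2}=\gamma_1\cup\gamma_2$ is unjustified.
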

\begin{proof}
It follows from Proposition \ref{prp:1} that $\trc{X} \subseteq \gamma_1 \cup \gamma_2$ and from (\ref{eq:13}) that $\ess{X} \subseteq \wh{\gamma_1 \cup \gamma_2}$. A rectifiable arc is polynomially convex \cite[Corollary 3.1.2]{St}. Moreover, by \cite[Theorem~3.1.1]{St}, 
 if $Y$ is a compact polynomially convex subset of $\compx{n}$ and $\Gamma$ is a compact connected set of finite length, then $(\wh{Y\cup\Gamma})\setminus (Y\cup\Gamma)$ is either empty or it contains a complex purely one-dimensional analytic subvariety of the complement $\compx{2} \setminus (Y\cup\Gamma)$. By taking $Y$ and $\Gamma$ to be the arcs $\gamma_1, \gamma_2$, it can be shown by following  the same rationale as in \cite[~Corollary~2]{SS1}, that the union of the two arcs cannot bound a complex one-dimensional variety. Therefore, $\wh{\gamma_1 \cup \gamma_2} = \gamma_1 \cup \gamma_2 \subset X$, so $\ess{X} \subset X$. Since $\wh{X}\setminus X \subseteq \ess{X}\setminus X = \emptyset$, it follows that $X$ is polynomially convex.
\end{proof}

Our next goal is to find a suitable hypersurface containing the open Whitney umbrella, such that the properties of Proposition \ref{prp:2} are satisfied.

\subsection{The characteristic foliation of the open Whitney umbrella} \label{s.redWU}
We identify $\real{4}_{(x,u,y,v)}$ with $\compx{2}_{(z, w)}$ for computational purposes.
If $I_2$ is the $2 \times 2$ identity matrix, we denote by
%\begin{equation*} \label{eq:2}
%\Omega = \begin{pmatrix}
%  0 & I_2 \\
%  -I_2 & 0
%\end{pmatrix}
%\end{equation*}
%the $4 \times 4$ matrix giving the standard symplectic structure on $\compx{2}$ and by
\begin{equation*} \label{eq:3}
J = \begin{pmatrix}
  0 & -I_2 \\
  I_2 & 0
\end{pmatrix}
\end{equation*}
the matrix defining the standard complex structure on $\compx{2}$. Let $\phi : \compx{2} \rightarrow \compx{2}$ be a local symplectomorphism which, without loss of generality, is assumed to preserve the origin. Let the Jacobian matrix of $\phi$ at $0$ be
\begin{equation*} \label{eq:4}
D\phi(0) = \begin{pmatrix}
A & B \\
C & D
\end{pmatrix},
\end{equation*}
where $A,B,C,D$ are the $2\times 2$ block components given by the partial derivatives of $\phi$. Since $\phi$ is symplectic, 
we have
\begin{equation} \label{eq:5}
A^tD-C^tB=I_2, \hrs{5} A^tC = C^tA, \hrs{5} D^tB = B^tD.
\end{equation}
Let $\psi : \real{4} \rightarrow \real{4}$ be the linear transformation given by the matrix
\begin{equation*} \label{eg:8}
\Psi = \begin{pmatrix}
D^t & -B^t\\
B^t & D^t
\end{pmatrix}.
\end{equation*}
Since $\Psi J = J\Psi$, the map $\psi$ complex linear. We now show that $\Psi$ is invertible. From (\ref{eq:5}) we get
\begin{equation} \label{eq:100}
D(\psi \circ \phi)(0) =
\begin{pmatrix}
I_2 & 0\\
E & G
\end{pmatrix}, \hrs{10} E = (e_{ij}), e_{ij} \in \compx{},
\end{equation}
where
\begin{equation} \label{eq:110}
G =(g_{ij}) = B^tB + D^tD.
\end{equation}
Since $D\psi(0)$ is symplectic, $\det D\phi(0) = 1$, and so $\det G = \det \Psi$. We claim that
\begin{equation} \label{eq:11_1}
\det G = g_{11}g_{22}-g_{12}^2 > 0.
\end{equation}
Indeed, let
$B =(b_{jk})$, and $D =(d_{jk})$. A straightforward computation gives
\begin{equation*} \label{eq:102}
\begin{split}
\det G &= (b_{11}b_{22}-b_{12}b_{21})^2 + (b_{11}d_{12}-b_{12}d_{11})^2 + (b_{11}d_{22}-b_{12}d_{21})^2 \\& + (b_{21}d_{12}-b_{22}d_{11})^2 + (b_{21}d_{22}-b_{22}d_{21})^2 + (d_{11}d_{22}-d_{12}d_{21})^2,
\end{split} 
\end{equation*}
which is obviously nonnegative. If $\det G = 0$, then, for $j=1,2$, the following hold
\begin{equation*} \label{eq:103}
%\begin{split}
(b_{j2} = 0) \Rightarrow (b_{j1}=0),\hrs{10} (d_{j2} = 0) \Rightarrow (d_{j1}=0).
%\end{split}
\end{equation*}
On the other hand, if any two or more of $b_{12}, b_{22}, d_{12}, d_{22}$ do not equal $0$, then the corresponding ratios $\displaystyle \frac{b_{11}}{b_{12}}, \frac{b_{21}}{b_{22}}, \frac{d_{11}}{d_{12}}, \frac{d_{21}}{d_{22}}$ are equal, e.g., if $b_{12} \neq 0$, $b_{22} \neq 0$, $d_{12} \neq 0$, and $d_{22} \neq 0$, then
\begin{equation*} \label{eq:104}
\frac{b_{11}}{b_{12}} = \frac{b_{21}}{b_{22}} = \frac{d_{11}}{d_{12}}= \frac{d_{21}}{d_{22}} = \lambda \in \real{}. 
\end{equation*}
It is not difficult to see that all possible combinations lead to $D\phi(0)$ either having two identically zero columns in the vertical $B|D$ block, or one column being a $\lambda$ multiple of another. In both scenarios $\det D\phi(0) = 0$, which is a contradiction. It follows then, that $\det G > 0$, which proves that $\Psi$ is nonsingular. Furthermore, \eqref{eq:110} and \eqref{eq:11_1} imply that $ g_{11} > 0, \hrs{5} g_{22} > 0$. 

Now, let
\begin{equation*} \label{eq:9}
\Sigma' = (\psi \circ \phi)(\Sigma),
\end{equation*}
which by construction is a totally real surface with an isolated singular point at the origin. We consider the following auxiliary hypersurface which contains $\Sigma$,
\begin{equation*} \label{eq:15}
M = M(\rho) = \{ (z,w) \in \compx{2} : \rho(z,w) := x^2 - yv^2 + \frac{9}{4}u^2-y^3 + C(xy-\frac{3}{2}uv) = 0 \}, \ \  C>0.
\end{equation*}
A direct computation shows that for any $C>0$, the gradient $\nabla \rho$  does not vanish in some punctured neighbourhood of the origin. Now, put
\begin{equation*}
M' = (\psi \circ \phi)(M) = M'(\rho'), \hrs{5} \rho' := \rho \circ (\psi \circ \phi)^{-1} .
\end{equation*}
It follows that $M'$ is also smooth in some punctured neighbourhood of the origin. Clearly, $\varphi(\Sigma)$ is locally polynomially convex at the origin if and only if $(\psi \circ \varphi)(\Sigma)$ is. We next show that, for some $C>0$, $M'$ is strictly pseudoconvex near the origin. Let $(x',u',y',v')$ be the coordinates in the target space of $\psi \circ \phi$ and let 
\begin{equation*}
(D(\psi \circ \phi)(0))^{-1} = 
\begin{pmatrix}
I_2 & 0\\
E' & G'
\end{pmatrix}, \hrs{10} E'=(e'_{ij}), G'=(g'_{ij}), \hrs{5} e_{ij}, g_{ij} \in \compx{}.
\end{equation*}
The formal Taylor expansion of $(\psi \circ \phi)^{-1}$ is given by
\begin{equation*}
\begin{split}
(\psi \circ \phi)^{-1}(x',u',y',v') = &\bigg( x' + \sigma^1, u'+\sigma^2, e'_{11}x'+e'_{12}u'+g'_{11}y'+g'_{12}v' + \sigma^3, \\& e'_{21}x'+e'_{22}u'+g'_{12}y'+g'_{22}v' + \sigma^4  \bigg) ,
\end{split}
\end{equation*}
where
\begin{equation*}
\sigma^i = \sum_{j+k+l+m \geq 2} h^i_{jklm} x'^ju'^ky'^lv'^m, \hrs{5} h^i_{jklm} \in \compx{}, \hrs{5} i \in \{1,2,3,4\}.
\end{equation*}
Then,
\begin{equation*}
\begin{split}
\rho'(x',u',y',v') &= (x' + \sigma^1)^2 \\&-(e'_{11}x'+e'_{12}u'+g'_{11}y'+g'_{12}v' + \sigma^3)(e'_{21}x'+e'_{22}u'+g'_{12}y'+g'_{22}v' + \sigma^4)^2 \\&+\frac{9}{4} (u'+\sigma^2)^2 - (e'_{11}x'+e'_{12}u'+g'_{11}y'+g'_{12}v' + \sigma^3)^3 \\& +C(x'+\sigma^1)(e'_{11}x' + e'_{12}u'+g'_{11}y'+g'_{12}v'+\sigma^3) \\& - \frac{3C}{2}(u'+\sigma^2)(e'_{21}x'+e'_{22}u'+g'_{12}y'+g'_{22}v'+\sigma^4).
\end{split}
\end{equation*}

A direct computation gives the Levi form of $\rho'$,
\begin{equation*} \label{eq:15_1}
L_{\rho'} =
\begin{pmatrix}
\displaystyle 2+2Ce'_{11} & \displaystyle Ce'_{12}-\frac{3}{2}Ce'_{21} + \frac{5i}{2}Cg'_{12} \\
\\
\displaystyle Ce'_{12}-\frac{3}{2}Ce'_{21} - \frac{5i}{2}Cg'_{12} & \displaystyle \frac{9}{2} - 3Ce'_{22} \\
\end{pmatrix}.
\end{equation*}
From this it is clear that for $C$ sufficiently small the Levi form is strictly positive-definite. This implies that $\rho'$ is strictly plurisubharmonic near the origin, hence $M'$ is strictly pseudoconvex in some punctured neighbourhood of the origin. Note that the constant $C$ depends on the symplectomorphism $\phi$.

We will show that $S = \Sigma'$ and $M = M'(\rho')$ satisfy the conditions of Proposition \ref{prp:2}. For this, in Section \ref{s.calcul}, we compute the the dynamical system describing the characteristic foliation of $\Sigma'$, and in Section \ref{s.red2QH} we describe the method of reduction to the principal part of a vector field due to Brunella and Miari \cite{BM}. We use this in Section \ref{s.phase} to determine the phase portrait of the characteristic foliation.

%%%%%%%%%%%%%%%%%%%%%%%%%%%%%%%%%%%%%%%%%%
\section{Calculation of the System} \label{s.calcul}
In this section we compute the relevant low order terms of the pullback to the parameterizing plane $\real{2}_{(t,s)}$ of the dynamic system that determines the characteristic foliation of $\Sigma '$. We introduce the following notation for the components of the gradient of $\rho '$,
\begin{equation*}
\nabla \rho' = \left( R_x(t,s), R_u(t,s), R_y(t,s), R_v(t,s) \right) ,
\end{equation*}
and we also set
\begin{equation*}
\sigma^i_x = \frac{\partial \sigma^i}{\partial x'}, \hrs{10} \sigma^i_u = \frac{\partial \sigma^i}{\partial u'}, \hrs{10} \sigma^i_y = \frac{\partial \sigma^i}{\partial y'}, \hrs{10} \sigma^i_v = \frac{\partial \sigma^i}{\partial v'}, \hrs{5} i \in \{1,2,3,4\}. 
\end{equation*}
A straightforward computation gives the Jacobian matrix of $(\psi \circ \phi)^{-1}$ at the origin,
\begin{equation} \label{eq:20}
\begin{split}
D(\psi \circ \phi)^{-1}(0) = 
\begin{pmatrix}
I_2 & 0\\
E' & G'
\end{pmatrix} = 
\begin{pmatrix}
I_2 & 0 \\
\\
-G^{-1}E & G^{-1} \\
\end{pmatrix}.
\end{split}
\end{equation}
The characteristic foliation of $\Sigma'$ is determined at every $p \in \Sigma' \setminus \{0\}$ by
\begin{equation*} \label{eq:130}
L_p\Sigma' = T_p\Sigma' \cap H_pM', \hrs{15} H_pM' = T_pM' \cap J(T_pM').
\end{equation*}
It follows that
\begin{equation*}
\langle JX_p, \nabla \rho' \rangle = 0, \hrs{5} \text{ for all } X_p \in L_p\Sigma', \ p \in \Sigma'.
\end{equation*}
We thus obtain a  smooth vector field $X \in T\Sigma'$, given by 
\begin{equation} \label{eq:13_1}
X = \alpha\frac{\partial f}{\partial t} + \beta\frac{\partial f}{\partial s},
\end{equation}
where $f : \real{2} \rightarrow \real{4}$ is defined as 
\begin{equation*}
f = \psi \circ \phi \circ \pi,
\end{equation*}
and $\alpha, \beta$ are smooth functions on $\real{2}$, satisfying $X_{p=f(t,s)} \in L_{p=f(t,s)}\Sigma', \hrs{5}$ for $p\neq0$. 
Consequently, we can choose
\begin{equation} \label{eq:135}
\alpha(t,s) = \langle J\frac{\partial f}{\partial s}, \nabla \rho' \rangle, \hrs{15} \beta(t,s) = -\langle J\frac{\partial f}{\partial t}, \nabla \rho' \rangle.
\end{equation}
We conclude that the characteristic foliation of $\Sigma'$ is defined by the following system of ODE's
\begin{equation} \label{eq:140}
\begin{cases}
\dot{t} = \alpha(t,s)\\
\dot{s} = \beta(t,s).
\end{cases}
\end{equation}
Writing 
\begin{equation*}
f(t,s) = (f_1(t,s), f_2(t,s), f_3(t,s), f_4(t,s)) ,
\end{equation*}
and using \eqref{eq:100} and \eqref{eq:1}, we can express each $f_i$ as a formal power series in $(t,s)$:
\begin{equation} \label{eq:140a}
\begin{split}
f_1(t,s) &= ts + f_{02}^1s^2 + f_{12}^1ts^2 + f_{21}^1t^2s + f_{03}^1s^3 + \sum_{j+k \geq 4} f_{jk}^1t^js^k,\\
f_2(t,s) &= \frac{2}{3}t^3 + f_{02}^2s^2 + f_{12}^2ts^2 + f_{21}t^2s + f_{03}s^3 + \sum_{j+k \geq 4} f_{jk}^2t^js^k,\\
f_3(t,s) &= g_{12}s + g_{11}t^2 + e_{11}ts + f_{02}^3s^2 + \frac{2e_{12}}{3}t^3 + f_{12}^3ts^2 + f_{21}^3t^2s + f_{03}^3s^3 + \sum_{j+k \geq 4} f_{jk}^3t^js^k,\\
f_4(t,s) &= g_{22}s + g_{12}t^2 + e_{21}ts + f_{02}^4s^2 + \frac{2e_{22}}{3}t^3 + f_{12}^4ts^2 + f_{21}^4t^2s + f_{03}^4s^3 + \sum_{j+k \geq 4} f_{jk}^4t^js^k,
\end{split}
\end{equation}
From the above identities, putting $\displaystyle X_t = \frac{\partial f}{\partial t}$, $\displaystyle X_s = \frac{\partial f}{\partial s}$, we get
\begin{equation} \label{eq:141}
X_t = \begin{pmatrix}
s+2f_{21}^1ts+f_{12}^1s^2\\
\\
2t^2 + 2f_{21}^2ts+f_{12}^2s^2\\
\\
2g_{11}t + e_{11}s + 2e_{12}t^2 + 2f_{21}^3ts + f_{12}^3s^2\\
\\
2g_{12}t + e_{21}s + 2e_{22}t^2 + 2f_{21}^4ts + f_{12}^4s^2
\end{pmatrix} + o(\abs{(t,s)}^2),
\end{equation}
and
\begin{equation} \label{eq:142}
X_s = \begin{pmatrix}
t+2f_{02}^1s+f_{21}^1t^2+2f_{12}^1ts+3f_{03}^1s^2\\
\\
2f_{02}^2s+f_{21}^2t^2+2f_{12}^2ts+3f_{03}^2s^2\\
\\
g_{12}+e_{11}t+2f_{02}^3s+f_{21}^3t^2+2f_{12}^3ts+3f_{03}^3s^2\\
\\
g_{22}+e_{21}t+2f_{02}^4s+f_{21}^4t^2+2f_{12}^4ts+3f_{03}^4s^2
\end{pmatrix} + o(\abs{(t,s)}^2).
\end{equation}
It follows from (\ref{eq:135}) that 
\begin{equation} \label{eq:270}
\begin{split}
\alpha(t,s) = -(X_s)_3R_x - (X_s)_4R_u + (X_s)_1R_y + (X_s)_2R_v = \sum_{j,k \geq 0} \alpha_{jk}t^js^k,\\
\beta(t,s) = (X_t)_3R_x + (X_t)_4R_u - (X_t)_1R_y - (X_t)_2R_v = \sum_{j,k \geq 0} \beta_{jk}t^js^k,
\end{split}
\end{equation}
where $(X_t)_i, (X_s)_i$, $i = 1, \dots, 4$, are the components of $X_t$, $X_s$, respectively. By a direct inspection using \eqref{eq:140a}, \eqref{eq:141}, \eqref{eq:142} and \eqref{eq:270}, we find that the terms up to order $3$ in the power expansion of $\alpha(t,s)$ are given by
\begin{equation*} \label{eq:28_1}
\alpha_{01}s = C\left( -g'_{11}g^2_{12} +\frac{1}{2}g'_{12}g_{12}g_{22} +\frac{3}{2} g'_{22}g_{22}^2   \right) s,
\end{equation*}
\begin{equation*} \label{eq:29_1}
\alpha_{20}t^2 = C\left( -g'_{11}g_{11}g_{12} - g'_{12}g^2_{12}  + \frac{3}{2}g'_{12}g_{11}g_{22} + \frac{3}{2}g'_{22}g_{12}g_{22}  \right) t^2,
\end{equation*}
and those of $\beta(t,s)$ by
\begin{equation*} \label{eq:30}
\beta_{11}ts = 2C\left( g'_{11}g_{11}g_{12} + g'_{12}g_{11}g_{22} - \frac{3}{2}g'_{12}g_{12}^2 - \frac{3}{2}g'_{22}g_{22}g_{12}  \right) ts,
\end{equation*}
\begin{equation*} \label{eq:31}
\beta_{30}t^3 = 2C\left( g'_{11}g_{11}^2 -\frac{1}{2}g'_{12}g_{12}g_{11} - \frac{3}{2}g'_{22}g_{12}^2 \right) t^3.
\end{equation*}
Replacing the primed coefficients with their expressions from (\ref{eq:20}) we obtain
\begin{equation} \label{eq:31a}
%\begin{split}
\alpha_{01} = (3C/2)g_{22},\hrs{10}
\alpha_{20} = -Cg_{12},\hrs{10}
\beta_{11} = -3Cg_{12},\hrs{10}
\beta_{30} = 2Cg_{11}.
%\end{split}
\end{equation}
Since $g_{11}, g_{22}$ are positive, it follows that
\begin{equation} \label{eq:31e}
\alpha_{01}>0 \mbox{ and } \beta_{30}>0.
\end{equation}
and, for $g_{12}\neq 0$,
\begin{equation} \label{eq:31f}
g_{12} \alpha_{20} <0 \mbox{ and } g_{12} \beta_{11} <0.
\end{equation}

Combining all of the above, the dynamical system (\ref{eq:140}) defining the characteristic foliation of $\Sigma'$ becomes
\begin{equation} \label{eq:system}
\begin{cases}
\dot{t} = \alpha(t,s) = \frac{3C}{2} g_{22} s - Cg_{12} t^2 + o (\abs{t}^2 + \abs{s} ),\\ \dot{s} = \beta(t,s) = -3C g_{12} ts + 2Cg_{11} t^3 + o(\abs{t}^3 + \abs{ts}).
\end{cases}
\end{equation}

%%%%%%%%%%%%%%%%%%%%%%%%%%%%%%%%%%%%%%%%%%%%%%%%%%%%
\section{Reduction to the Principal Part}\label{s.red2QH}

To prove that the system \eqref{eq:system} defines a characteristic foliation satisfying the conditions of Proposition \ref{prp:2}, we need to determine the topological structure of the vector field $X$ in \eqref{eq:13_1}. Although the linear part of $X$ does not vanish, its eigenvalues do vanish, making the origin a nonelementary isolated singularity of $X$. Therefore, we cannot apply standard results, such as the Hartman-Grobman theorem. Instead, we will make use of a result by Brunella and Miari \cite{BM} which, under certain conditions, reduces the problem to determining the topological class of a truncated vector field.

Let $\Chi$ be a $C^{\infty}$-smooth vector field on $\real{2}_{(x_1,x_2)}$, with an isolated nonelementary singularity at the origin. Its power series expansion at $0$ can be written as
\begin{equation} \label{eq:55}
\Chi(x) = \sum_{j=1,2} \sum_Qf_{jQ}\;x^Q x_j \hrs{2} \frac{\partial}{\partial x_j}, \hrs{5} \text{ where } Q = (q_1,q_2) \in \integ{2}, q_j \geq -1, \text{ and } x^Q = x_1^{q_1} x_2^{q_2}. 
\end{equation} 
We also assume that $f_{1(i,-1)}=f_{2(-1,i)}=0$ for all $i \in \nat{}\cup \{-1\}$ (here $\nat{} = \{0,1,2, \dots \}$). We call the subset of $\real{2}$ defined by
\begin{equation*}
\bf{D} = \{ Q + (1,1) : Q \in \integ{2}, \abs{f_{1Q}} + \abs{f_{2Q}} \neq 0  \}
\end{equation*}
the \it{support} of of the vector field $\Chi$.
The \it{Newton polygon} of $\Chi$ is defined as the convex hull $\Gamma$ of the set
\begin{equation*}
\bigcup_{Q \in \bf{D}} \{Q+P : P \in \real{2}_+  \}, 
\end{equation*} 
where $\real{}_{+} = [0, +\infty)$. It coincides with the intersection of all support half spaces of $\bf{D}$ (see \cite{B}). The boundary of $\Gamma$ consists of edges, which we denote by $\Gamma_j^{(1)}$, and vertices, which we denote by $\Gamma_j^{(0)}$, where $j$ is some enumeration and the upper index denotes the dimension of the object. The union of the compact edges of $\Gamma$, which we denote by $\hat{\Gamma}$, is called the \it{Newton diagram}  of $\Chi$. % (or the \it{open Newton polygon}, in the terminology of \cite{B}).

\begin{defn}
Let $\Chi$ be given as in (\ref{eq:55}). 
\begin{enumerate}[$(i)$]
\item The vector field
\begin{equation*}
\pp{\Chi}(x) = \sum_{j =1,2} \sum_{Q \in \hat{\Gamma}} f_{jQ}\;x^Q x_j  \frac{\partial}{\partial x_j}
\end{equation*}
is called the \it{principal part} of $\Chi$.
\item Let $\Gamma_1^{(1)},  \dots, \Gamma_N^{(1)}$, $N>0$, be all the (compact) edges in the Newton diagram. The vector field
\begin{equation*}
\Chi_k(x) = \sum_{j =1,2} \sum_{Q \in \Gamma_k^{(1)}} f_{Q}\;x^Q x_j  \frac{\partial}{\partial x_j}
\end{equation*}
is called the \it{quasi-homogeneous component} of the principal part $\pp{\Chi}(x)$ relative to $\Gamma_k^{(1)}$, for $k = 1,\dots, N$.
\end{enumerate}
\end{defn}

\begin{defn}
Let $\Chi_1$, $\Chi_2$ be two planar vector fields defined on the open subsets $\Omega_1$ and $\Omega_2$ of $\real{2}$, respectively.   We say that $\Chi_1$ is \it{topologically equivalent to} $\Chi_2$ if there exists a homeomorphism $h : \Omega_1 \rightarrow \Omega_2$ sending orbits of $\Chi_1$ to orbits of $\Chi_2$. More precisely, if $\gamma_1$ is the orbit of $\Chi_1$ passing through $p \in \Omega_1$, then $h(\gamma_1)$ is the orbit of $\Chi_2$ passing through $h(p)$. In this case we say that $\Chi_1$ and $\Chi_2$ belong to the same topological class of vector fields.
\end{defn}

In general, if a $C^{\infty}$-smooth planar vector field does not have characteristic orbits (i.e., orbits approaching the singular point in positive or negative time with a well-defined slope limit), then an isolated singularity is either a centre or a focus, or briefly, a \it{centre-focus}. Following the terminology introduced in \cite{BM}, we say that two vector fields on $\real{2}$, $\Chi_1$ and $\Chi_2$, $\Chi_1(0)=\Chi_2(0) = 0$, are locally topologically equivalent \it{modulo centre-focus} if either one of the following cases apply:
\begin{enumerate}[$(i)$]
\item $\Chi_1$ and $\Chi_2$ have characteristic orbits and are topologically equivalent near the origin, or
\item $\Chi_1$ and $\Chi_2$ are both centre-foci.
\end{enumerate}

Following Brunella and Miari, we say that a  $C^{\infty}$-smooth planar vector field $\Chi$, $\Chi(0) = 0$, has a 
\it{nondegenerate} principal part $\pp{\Chi}$, if none of its quasi-homogeneous components has singularities 
on $(\real{}\setminus \{0\})^2$. The main result of Brunella and Miari is the following:

\vrs{3}

\it{Let $\mathcal{X}$ be a $C^{\infty}$-smooth vector field on $\real{2}$, $\mathcal{X}(0)=0$, with nondegenerate 
principal part  $\pp{\Chi}$, such that $0$ is an isolated singularity of $\pp{\Chi}$. Then $\mathcal{X}$ is locally 
topologically equivalent to $\pp{\Chi}$ modulo centre-focus.}

\begin{figure}[h]
\centering
\includegraphics[width=0.5 \textwidth]{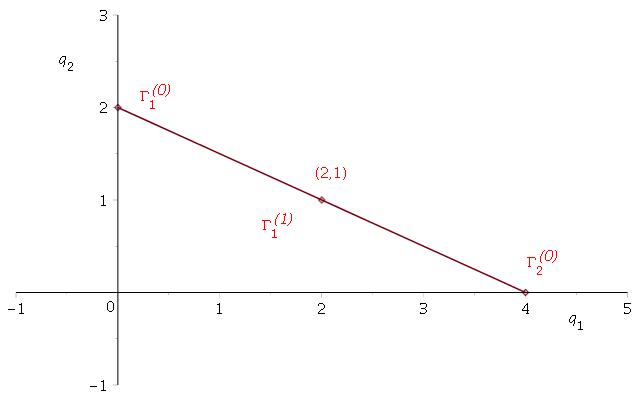}
\caption{The Newton diagram for (\ref{eq:system}).}
\label{fig:1}
\end{figure}

For the system \eqref{eq:system}, the Newton diagram consists of the two vertices $\Gamma^{(0)}_1 = (0, 2)$, $\Gamma^{(0)}_2 = (4,0)$ and the edge $\Gamma^{(1)}_1$ connecting them, see Figure \ref{fig:1}. The principal part of $X$ is given by
\begin{equation}\label{eq:320}
X_{\mathit{\Delta}}(t,s) = \left( \alpha_{01}s + \alpha_{20}t^2 \right)\frac{\partial}{\partial t} + \left( \beta_{11}ts + \beta_{30}t^3 \right)\frac{\partial}{\partial s} .
\end{equation}
Notice that $X_{\mathit{\Delta}}$ also counts for the terms corresponding to the vertex $\Gamma_{3}^{(0)} = (2,1) \in \Gamma^{(1)}_1$.
Clearly, $X_{\mathit{\Delta}}$ has only one quasi-homogeneous component, that is $X_{\mathit{\Delta}}$ itself. We claim that
$X_{\mathit{\Delta}}$ is nondegenerate. Indeed, a singular point $(t,s)$ of $X_{\mathit{\Delta}}$ would satisfy the system
\begin{equation} \label{eq:33}
\begin{cases}
\alpha_{01}s + \alpha_{20}t^2 = 0\\ \beta_{11}ts + \beta_{30}t^3 =0.
\end{cases}
\end{equation}
Note that, if $t=0$, the only solution of \eqref{eq:33} is the origin, hence we can assume $t \neq 0$. Thus, we obtain a linear system in $s$ and $t^2$, that has nonzero solutions if and only if $\alpha_{01}\beta_{30} = \alpha_{20}\beta_{11}$. However, this is impossible, since, by (\ref{eq:31a}) and (\ref{eq:11_1}), we have
\begin{equation} \label{eq:36_1}
%\begin{split}
\alpha_{01}\beta_{30} - \alpha_{20}\beta_{11} = 3C^2g_{11}g_{22} - 3C^2g_{12}^2 = 3C^2(g_{11}g_{22} - g_{12}^2) > 0.
%\end{split}
\end{equation}
This proves that $X_{\mathit{\Delta}}$ is nondegenerate, with one isolated singularity at the origin. Thus, by Brunella and Miari it suffices to compute the phase portrait of $X_{\mathit{\Delta}}$.

%%%%%%%%%%%%%%%%%%%%%%%%%%%%%%%%%%%%%%%%%%%%%%%%%%%%%%%%%
\section{Final Step: the Phase Portrait} \label{s.phase}

%\subsection{Phase portrait of the open umbrella} \label{ssec:phsptr}
Recall that the principal part of the vector field defined by \eqref{eq:system} is given by \eqref{eq:320}, 
and the corresponding ODE system is
\begin{equation} \label{eq:37}
\begin{cases}
\dot{t} = \alpha_{01}s + \alpha_{20}t^2 = t(\alpha_{01}t^{-1}s + \alpha_{20}t) \\ \dot{s} = \beta_{11}ts + \beta_{30}t^3 = s(\beta_{11}t + \beta_{30}t^3s^{-1}) .
\end{cases}
\end{equation}

We determine the phase portrait of  $X_{\mathit{\Delta}}$ near the origin using Bruno's theory of normal forms~\cite{B}. For each element $\Gamma_j^{(d)}$ of the Newton diagram associated with \eqref{eq:37}, there is a corresponding sector  $\mathcal U^{d}_j$ in the phase space $\rl^2_{(t,s)}$, so that together they form a full neighbourhood of the origin (here boundaries of the sectors are not necessarily integral curves). 
In each $\mathcal U^{d}_j$ one brings the system to a normal form by using power transformations (quasihomogeneous blow-ups)
which reduces the problem to the study of elementary singularities of the transformed system. This allows one to determine the behaviour of the orbits in each sector. After that the 
results in each sector are glued together to obtain the overall phase portrait of the system near the origin. In what follows we apply Bruno's 
method to our specific case, referring the reader to \cite{B} or \cite[Section 5]{SS1} for more details on the general method.

We remark that in Bruno's version, the Newton polygon differs from that defined in Section~\ref{s.red2QH} by a translation of $(-1,-1)$,
and so the Newton diagram of $X_{\mathit{\Delta}}$ now consists in two vertices, $\Gamma^{(0)}_1=(-1,1)$, $\Gamma^{(0)}_2=(3,-1)$, and 
one edge $\Gamma^{(1)}$ connecting $\Gamma^{(0)}_1$ and $\Gamma^{(0)}_2$.  By Bruno's classification~\cite[p 138]{B}, the vertices are of Type I, so the integral curves in the sectors
\begin{equation*}
\begin{split}
&\mathcal{U}_1^{(0)}(\varepsilon) = \{(t,s) \in \real{2} : (\abs{t}, \abs{s})^{(1,0)} \leq \varepsilon, \hrs{3}  (\abs{t}, \abs{s})^{(-2,1)} \leq \varepsilon \}, \\& \mathcal{U}_2^{(0)}(\varepsilon) = \{(t,s) \in \real{2} : (\abs{t}, \abs{s})^{(0,1)} \leq \varepsilon, \hrs{3}  (\abs{t}, \abs{s})^{(2,-1)} \leq \varepsilon \},
\end{split}
\end{equation*}
are vertical and horizontal, respectively, in particular, they do not approach the origin.

Next, we analyze the behaviour of the orbits in the sector
\begin{equation*}
\mathcal{U}^{(1)}(\varepsilon) = \{\{(t,s) \in \real{2} : \varepsilon \leq (\abs{t}, \abs{s})^{(-2,1)} \leq \frac{1}{\varepsilon}, \abs{t}, \abs{s} \leq \varepsilon  \}
\end{equation*}
corresponding to the edge $\Gamma^{(1)}$, whose unit directional vector is $R = (-2, 1)$. Following Bruno's method, the vector $R$ leads to the coordinate transformation $y_1 = t, y_2 = t^{-2}s$. After the change of time parameter $d\tau_1 = y_1d\tau$, we obtain the equivalent system
\begin{equation} \label{eq:39}
\begin{cases}
\dot{y}_1 = y_1\left(\alpha_{20} + \alpha_{01}y_2\right),\\ \dot{y}_2 = y_2\left[\beta_{30}y_2^{-1} + (\beta_{11}-2\alpha_{20}) - 2\alpha_{01}y_2 \right].
\end{cases}
\end{equation}
We are interested in the singular points along the $y_2$-axis, i.e., the solutions of the quadratic equation
\begin{equation} \label{eq:40}
-2\alpha_{01}y_2^2 +(\beta_{11}-2\alpha_{20})y_2 + \beta_{30} = 0,
\end{equation}
whose discriminant is
\begin{equation*}
\mathcal{D} = (\beta_{11}-2\alpha_{20})^2 + 8\alpha_{01}\beta_{30}.
\end{equation*} 
By \eqref{eq:31e}, $\mathcal{D}$ is positive, hence (\ref{eq:40}) has two distinct real roots
\begin{equation} \label{eq:41}
y^{\pm} = \frac{\beta_{11} -2\alpha_{20} \pm\sqrt{(\beta_{11}-2\alpha_{20})^2 + 8\alpha_{01}\beta_{30}}}{4\alpha_{01}}.
\end{equation}
We need to analyze the dynamics near each point $(0, y^{\pm})$, and to do so, we translate $y^\pm$ to the origin via he following change of coordinates
\begin{equation*} \label{eq:41_0}
z_1 = y_1, \hrs{5} z_2 = y_2 - y^\pm.
\end{equation*}
As a result, the system \eqref{eq:39} becomes
\begin{equation*} \label{eq:39_1}
\begin{cases}
\dot{z}_1 = z_1\left[(\alpha_{20} + \alpha_{01}y^\pm) + \alpha_{01}z_2\right],\\ \dot{z}_2 = z_2\left[(\beta_{30} + \beta_{11}y^\pm-2\alpha_{20}y^\pm -2\alpha_{01}(y^\pm)^2 )z_2^{-1} + (\beta_{11}-2\alpha_{20} -4\alpha_{01}y^\pm) - 2\alpha_{01}z_2 \right].
\end{cases}
\end{equation*}
This is a system whose linear part does not vanish, and its eigenvalues are given by
\begin{equation} \label{eq:41_1}
%\begin{split}
\lambda_1^{\pm} = \alpha_{20}+\alpha_{01} y^{\pm}, \hrs{10} \lambda_2^{\pm} = \beta_{11} -2\alpha_{20} - 4\alpha_{01}y^{\pm}.
%\end{split}
\end{equation}

\begin{lemma} \label{lem:10}
In the above setting, the following inequalities hold,
%\begin{enumerate}[(i)]
\begin{equation*}
\lambda_1^+ >0, \hrs{10} \lambda_1^- <0, \hrs{10} \lambda_2^+ < 0, \hrs{10} \lambda_2^- > 0.
\end{equation*}
%\end{enumerate}
\end{lemma}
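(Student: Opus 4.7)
The plan is to avoid the surd manipulations suggested by \eqref{eq:41} and instead view both pairs of eigenvalues through the quadratic polynomial $P(y) := -2\alpha_{01}y^2 + (\beta_{11}-2\alpha_{20})y + \beta_{30}$ from \eqref{eq:40}, whose roots are $y^{\pm}$. Using \eqref{eq:31a} I first record the values $\alpha_{01}=(3C/2)g_{22}>0$, $\beta_{30}=2Cg_{11}>0$, $\alpha_{20}=-Cg_{12}$, $\beta_{11}=-3Cg_{12}$, together with the inequality $g_{11}g_{22}-g_{12}^2>0$ from \eqref{eq:11_1}.

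The signs of $\lambda_2^{\pm}$ are immediate once one notices that $\lambda_2^{\pm} = P'(y^{\pm})$. Since $P$ has leading coefficient $-2\alpha_{01}<0$ (by \eqref{eq:31e}) and two distinct real roots $y^-<y^+$, its graph is a downward-opening parabola with vertex strictly between the roots; consequently $P'$ is positive at $y^-$ and negative at $y^+$. This gives $\lambda_2^{-}>0$ and $\lambda_2^{+}<0$ without any explicit computation.

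For $\lambda_1^{\pm} = \alpha_{20}+\alpha_{01}y^{\pm}$, I set $y_0 := -\alpha_{20}/\alpha_{01}$ and, using $\alpha_{01}>0$, observe that $\mathrm{sgn}(\lambda_1^\pm) = \mathrm{sgn}(y^\pm - y_0)$. Thus it suffices to show $y^- < y_0 < y^+$, which, by the downward concavity of $P$, is equivalent to $P(y_0)>0$. A short direct evaluation collapses $P(y_0)$ to $(\alpha_{01}\beta_{30} - \alpha_{20}\beta_{11})/\alpha_{01}$, and by \eqref{eq:36_1} the numerator equals $3C^2(g_{11}g_{22}-g_{12}^2)>0$. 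This yields $\lambda_1^{-}<0<\lambda_1^{+}$ and completes the argument.

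I do not anticipate any real obstacle here; the only step requiring any insight is to recognize $\lambda_2^{\pm}$ as $P'(y^{\pm})$ and to test the position of $y_0$ relative to the roots of $P$ via $P(y_0)$. After these two observations, every sign assertion reduces to one of the already-established inequalities \eqref{eq:11_1}, \eqref{eq:31e}, and \eqref{eq:36_1}, so no surd calculation with \eqref{eq:41} is needed.
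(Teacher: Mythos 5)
Your proof is correct, and it takes a genuinely different and cleaner route than the paper. The paper substitutes the explicit surd formula \eqref{eq:41} for $y^{\pm}$ into \eqref{eq:41_1}, which forces a case analysis on the sign of $g_{12}$ (using \eqref{eq:31f}) and a manipulation of inequalities involving radicals before the decisive inequality \eqref{eq:36_1} can be invoked. You instead work intrinsically with the quadratic $P(y)=-2\alpha_{01}y^2+(\beta_{11}-2\alpha_{20})y+\beta_{30}$ whose roots are $y^{\pm}$: the identity $\lambda_2^{\pm}=P'(y^{\pm})$ settles the second pair of signs immediately from the downward concavity of $P$ and the strict separation of its roots (guaranteed by $\alpha_{01}\beta_{30}>0$, i.e.\ \eqref{eq:31e}); and writing $\lambda_1^{\pm}=\alpha_{01}(y^{\pm}-y_0)$ with $y_0=-\alpha_{20}/\alpha_{01}$ reduces the first pair to checking that $y_0$ lies strictly between the roots, equivalently $P(y_0)>0$, which collapses exactly to $(\alpha_{01}\beta_{30}-\alpha_{20}\beta_{11})/\alpha_{01}>0$, i.e.\ \eqref{eq:36_1} divided by $\alpha_{01}$. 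This bypasses both the surd arithmetic and the $g_{12}$ case split (including the separate $g_{12}=0$ case the paper handles first), while making transparent that the entire lemma rests only on $\alpha_{01}>0$, $\beta_{30}>0$, and $\alpha_{01}\beta_{30}-\alpha_{20}\beta_{11}>0$. Both proofs use the same underlying inequalities; yours exposes the mechanism more directly.
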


\begin{proof}
Suppose first that $g_{12}=0$. Then \eqref{eq:31a} implies that $\alpha_{20}=\beta_{11}=0$, hence $\displaystyle y^{\pm} = \pm\sqrt{\frac{\beta_{30}}{2\alpha_{01}}}$. The corresponding eigenvalues become
\begin{equation*} \label{eq:41_b}
%\begin{split}
\lambda_1^{\pm} = \pm\alpha_{01} \sqrt{\frac{\beta_{30}}{2\alpha_{01}}}, \hrs{15} \lambda_2^{\pm} = \mp 4\alpha_{01}\sqrt{\frac{\beta_{30}}{2\alpha_{01}}},
%\end{split}
\end{equation*}
and by \eqref{eq:31e}, none of them can equal zero. This proves the lemma in the case $g_{12}=0$ so, for the rest of the proof, we assume $g_{12} \neq 0$.

We next observe that, by substituting (\ref{eq:41}) in (\ref{eq:41_1}), we obtain
\begin{equation*}
\lambda_2^{\pm} = \mp\sqrt{(\beta_{11}-2\alpha_{20})^2 + 8\alpha_{01}\beta_{30}},
\end{equation*}
which, by \eqref{eq:31e}, cannot be zero, hence the last two inequalities of the lemma follow.

Suppose now that
\begin{equation*} \label{eq:48}
\lambda_1^+ = \alpha_{20}+\alpha_{01} y^+ \leq 0.
\end{equation*}
Then, by substituting the expression (\ref{eq:41}) for $y^+$, we get
\begin{equation} \label{eq:50}
\beta_{11} + 2\alpha_{20} + \sqrt{(\beta_{11} - 2\alpha_{20})^2 + 8\alpha_{01}\beta_{30}} \leq 0.
\end{equation}
By \eqref{eq:31f}, if $g_{12} <0$ then $\beta_{11} + 2\alpha_{20} > 0$, hence (\ref{eq:50}) cannot be true. If $g_{12} >0$ then $\beta_{11} + 2\alpha_{20} < 0$, so (\ref{eq:50}) leads to
\begin{equation*} \label{eq:51}
(\beta_{11} - 2\alpha_{20})^2 + 8\alpha_{01}\beta_{30} < (\beta_{11} + 2\alpha_{20})^2,
\end{equation*}
which, after simplifications, becomes
\begin{equation*} \label{eq:52}
\alpha_{01}\beta_{30} -  \alpha_{20}\beta_{11}< 0,
\end{equation*}
hence contradicting (\ref{eq:36_1}). Thus, in both cases, we conclude that $\lambda_1^+ > 0$.

Substituting $y^-$ in the first equation of (\ref{eq:41_1}) with its expression (\ref{eq:41}), we get
\begin{equation} \label{eq:53}
\lambda_1^- = \frac{1}{4}\left( 2\alpha_{20} + \beta_{11} - \sqrt{(\beta_{11}-2\alpha_{20})^2 + 8\alpha_{01}\beta_{30}} \hrs{2} \right).
\end{equation}
If $g_{12}>0$ then $2\alpha_{20} + \beta_{11} <0$, hence by (\ref{eq:53}), $\lambda_1^- <0$. If $g_{12}<0$, then $2\alpha_{20} + \beta_{11} >0$. In this case, suppose $\lambda_1^-\geq 0$. By (\ref{eq:53}), it follows that
\begin{equation*}
2\alpha_{20} + \beta_{11} \geq \sqrt{(\beta_{11} - 2\alpha_{20})^2 + 8\alpha_{01}\beta_{30}},
\end{equation*}
and since $2\alpha_{20} + \beta_{11} >0$,
\begin{equation*}
(2\alpha_{20} + \beta_{11})^2 \geq (\beta_{11} - 2\alpha_{20})^2 + 8\alpha_{01}\beta_{30},
\end{equation*}
which leads to
\begin{equation*}
8(\alpha_{01}\beta_{30} -\alpha_{20}\beta_{11}) \leq 0 .
\end{equation*}
Again, this contradicts (\ref{eq:36_1}), and it follows that $\lambda_1^- <0$, which proves the lemma.
\end{proof}

By Lemma \ref{lem:10}, for both $y^+$ and $y^-$, the corresponding eigenvalues are of opposite signs, hence the phase portrait of system \eqref{eq:39} is a saddle at the origin. It follows that, in $(y_1,y_2)$-coordinates, the $y_2$-axis and the lines $\{y_2=y^+\}$, $\{y_2=y^-\}$ are integral curves. Let $L_1 = \{(y_1,y^+) : y_1>0\}$, $L_2 = \{(y_1,y^+) : y_1<0\}$, $L_3 = \{(y_1,y^-) : y_1>0\}$, $L_4 = \{(y_1,y^-) : y_1<0\}$, $L_5 = \{(0,y_2) :~ y_2>~y^+\}$, $L_6 = \{(0,y_2) : y_2<y^-\}$ and $I = \{(0, y_2) : \min\{y^-, y^+\} < y_2 < \max\{y^-, y^+\}\}$. In the strip $\{(y_1, y_2) : y_1 \in \real{}, \min\{y^-, y^+\} < y_2 < \max\{y^-, y^+\}\}$ of $\real{2}_{(y_1,y_2)}$, the integral curves are asymptotic to $L_1$ and $L_3$ or to $L_2$ and $L_4$, and do not touch $I$. The rest of the orbits are asymptotic to $L_2$, $L_5$ or to $L_5$, $L_1$ or   to $L_6, L_4$ or, finally, to $L_6$ and $L_3$. This means that in the original system there are two integral curves $s=y^{\pm}t^4$ entering the origin while the other integral curves are in the complement of these two curves. Lastly, we observe that for a sufficiently small 
$\varepsilon >0$, the curves $s=y^{\pm}t^2$ enter $\mathcal{U}^{(1)}(\varepsilon)$, which completes the analysis for the edge $\Gamma^{(1)}$ of the Newton diagram.

Gluing the orbits in all three sectors corresponding to  ($\Gamma_1^{(0)}$, $\Gamma_2^{(0)}$, $\Gamma^{(1)}$) , we see that the phase portrait near the origin of the system \eqref{eq:system} is a saddle. By letting $\gamma_1$ and $\gamma_2$ be the curves $s=y^{\pm}t^2$, we conclude that any small enough compact $K$ which is not contained in $\gamma_1 \cup \gamma_2$ will meet one of the orbits of the characteristic foliation at exactly one point, which shows that the conditions of Proposition \ref{prp:2} are met. This completes the proof.

%%%%%%%%%%%%%%% Biblio %%%%%%%%%%%%%%%%%%%%%%%

\end{document}